\documentclass[12pt]{article}

\usepackage{amsthm}
\usepackage{amssymb}
\usepackage{amsmath}
\usepackage{mathtools}
\usepackage{xfrac}
\usepackage{url} 
\usepackage{hyperref}
\usepackage{slashbox}

\usepackage[ruled,vlined]{algorithm2e}
\usepackage{graphicx}

\usepackage{color}


\newcommand{\mc}[1]{\mathcal{#1}}

\theoremstyle{definition}

\theoremstyle{plain}
\newtheorem{theorem}{Theorem}[section]
\newtheorem{lemma}[theorem]{Lemma}
\newtheorem{claim}[theorem]{Claim}

\newtheorem{corollary}[theorem]{Corollary}

\title{Erd\H{o}s-Szekeres without induction}
\author{
Sergey Norin\thanks{Department of Mathematics and Statistics, McGill University. Email: {\tt snorin@math.mcgill.ca}. Supported by an NSERC grant 418520.}\and \and
Yelena Yuditsky\thanks{School of Computer Science, McGill University. Email: {\tt yuditskyl@gmail.com}.}}

\begin{document}

\maketitle

\begin{abstract}
Let $ES(n)$ be the minimal integer such that  any set of $ES(n)$ points in the plane in general position contains $n$ points in convex position. The problem of estimating $ES(n)$ was first formulated by Erd\H{o}s and Szekeres~\cite{ES35}, who proved that $ES(n) \leq \binom{2n-4}{n-2}+1$. The current best  upper bound, $\lim\sup_{n \to \infty} \frac{ES(n)}{\binom{2n-5}{n-2}}\le \frac{29}{32}$, is due to Vlachos~\cite{V15}. We improve this  to $$\lim\sup_{n \to \infty} \frac{ES(n)}{\binom{2n-5}{n-2}}\le \frac{7}{8}.$$ 


\end{abstract}

\section{Introduction}

The following problem  of Erd\H{o}s and Szekeres has attracted considerable attention over the years. 

\vskip 10pt
\noindent {\bf The Erd\H{o}s-Szekeres problem.}
For a positive integer $n \geq 3$, determine
the smallest integer $ES(n)$ such that any set of at least $ES(n)$ points in general
position in the plane contains $n$ points
that are the vertices of a convex $n$-gon.
\vskip 10pt

The first bounds on $ES(n)$ were given by  Erd\H{o}s and Szekeres \cite{ES35,ES60}, who have shown that  $$ 2^{n-2}+1 \le ES(n) \le \binom{2n-4} {n-2}+1.$$ They conjectured that the lower bound is tight. Their conjecture is verified for  $n \leq 6$. It is trivial for $n=3$; the proof for $n=4$ was given by Esther Klein and can be found in~\cite{ES35}; the proof for $n=5$, as stated in~\cite{ES35}, was first given by Makai and can be found in \cite{KKS70}; the proof for $n=6$ is due to  Szekeres and Peters~\cite{SP06}. 

For the upper bound, although it is very far from the lower bound, little improvement was made over the last eighty years. The first such improvement was given by Chung and Graham~\cite{CG98}, who improved the upper bound by one to $\binom{2n-4} {n-2}$. Shortly after, Kleitman and Pachter \cite{KP98} showed that $ES(n)\le \binom{2n-4}{n-2}+7-2n$. T\'{o}th and Valtr \cite{TV98,best-upper} have shown that $ES(n)\le {{2n-5} \choose {n-2} }+ 1$. Finally, very recently Vlachos~\cite{V15} have shown that $\lim\sup_{n \to \infty} \frac{ES(n)}{\binom{2n-5}{n-2}}\le \frac{29}{32}$. 

We  further improve the upper bound as follows.
\begin{theorem}\label{thm:main}
$$\lim\sup_{n \to \infty} \frac{ES(n)}{\binom{2n-5}{n-2}}\le \frac{7}{8}.$$
\end{theorem}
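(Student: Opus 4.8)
The plan is to improve the Erdős–Szekeres upper bound by sharpening the cup–cap machinery that underlies the Tóth–Valtr and Vlachos approaches. First I would recall the standard framework: fix a direction so that no two of the points share an $x$-coordinate, and call a sequence of points forming a monotone-slope convex chain a \emph{cap} (slopes decreasing) and a \emph{cup} (slopes increasing). The classical function $f(k,\ell)$, the least $N$ such that any $N$ points in general position contain a $k$-cup or an $\ell$-cap, satisfies $f(k,\ell)=\binom{k+\ell-4}{k-2}+1$. The crux of all recent improvements is that a set on $ES(n)$ points avoiding a convex $n$-gon can be analyzed by tracking the longest cups and caps through each point, and that the extremal configurations are highly constrained. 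My goal would be to set up a refined double-counting or weighting scheme on these longest-chain labels so that the count of point sets avoiding both long cups and long caps comes out a constant factor below $\binom{2n-5}{n-2}$.

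Second, I would carry out the following steps in order. Step one: for each point $p$ in a hypothetical extremal set $S$, assign the pair $(a(p),b(p))$ where $a(p)$ is the size of the largest cap ending at $p$ and $b(p)$ is the size of the largest cup ending at $p$; the pigeonhole injectivity of $p\mapsto(a(p),b(p))$ is the engine behind $f(k,\ell)$. Step two: following Vlachos, rather than forbidding a single $n$-gon I would exploit that a convex $n$-gon decomposes into a cup and a cap sharing endpoints, so that avoiding an $n$-gon forces a deficiency in how cups and caps can be glued at their extreme left/right points. Step three: I would introduce the key structural improvement --- identifying \emph{several} near-extremal gluing configurations (not just the two Vlachos uses) and showing they cannot simultaneously be saturated, which is where the fraction $\tfrac{29}{32}$ should drop to $\tfrac{7}{8}$. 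Concretely I would partition the label space of $(a,b)$ pairs into regions and bound the contribution of each region, arranging the bookkeeping so that a positive fraction of the label rectangle is provably unused.

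Third, to pin the constant I would compare the asymptotics: since $\binom{2n-5}{n-2}\sim \binom{2n-4}{n-2}/2$ and the binomial ratios along the relevant diagonals tend to explicit constants, an improvement of a fixed number of ``forbidden slots'' in the label rectangle translates, after dividing by $\binom{2n-5}{n-2}$ and taking $\limsup$, into a multiplicative savings. I would aim to show that the number of points is at most roughly $\tfrac{7}{8}\binom{2n-5}{n-2}(1+o(1))$ by exhibiting that at least a $\tfrac{1}{8}$-fraction of the cup/cap labels near the boundary of convexity are incompatible with avoiding the $n$-gon. The asymptotic normalization here is routine once the combinatorial count is in hand.

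The hard part will be Step three: controlling the \emph{simultaneous} saturation of multiple gluing configurations. Vlachos's $\tfrac{29}{32}$ comes from a careful but essentially two-parameter exclusion; pushing to $\tfrac{7}{8}$ requires a genuinely finer case analysis or a cleverer global inequality that prevents the extremal set from being optimal against all the competing cup–cap splittings at once. I expect the main obstacle to be ruling out the ``balanced'' configurations where the point set is locally extremal for each individual constraint but is forced to be globally sub-extremal; making that tension quantitative --- and resistant to the $o(1)$ error terms in the binomial asymptotics --- is where the real work lies, and I anticipate it will require either an inductive-looking potential argument cleverly repackaged to avoid the classical induction (consistent with the paper's title) or an extremal-graph-theoretic reformulation of the label incompatibilities.
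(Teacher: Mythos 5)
Your proposal is a research plan rather than a proof: the step that would deliver the constant $\tfrac78$ (your ``Step three'') is exactly the part you leave open, and none of the ideas that actually close it appear in your sketch. Worse, Step one rests on a false premise. The pigeonhole injectivity of $p \mapsto (a(p),b(p))$ (longest cup ending at $p$, longest cap ending at $p$) is the engine behind Seidenberg's proof for \emph{monotone subsequences}, but it fails for cups and caps: a longest cup ending at $p$ need not extend through a later point $q$, because the last slope of the cup may exceed the slope of $pq$, so the label pair of $q$ need not dominate that of $p$ and the map is not injective. Even if it were, a pair label lives in a $(k-2)\times(\ell-2)$ grid and would give a product bound, not $\binom{k+\ell-4}{k-2}$. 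Repairing this is precisely the core technical contribution of the paper: each point is encoded not by a pair but by a \emph{word} of length $k+l$ over a two-letter alphabet (the words $L_i$ and $R_i$, obtained by sorting the optimal extension slopes $\alpha^i(t),\beta^i(t)$, resp.\ $\gamma^i(t),\delta^i(t)$), and Lemma~\ref{lemma:a_b} shows that distinct points receive distinct words, giving an injection into a set of size exactly $\binom{k+l}{k}$. Without some such encoding your framework cannot even recover the T\'oth--Valtr bound $\binom{2n-5}{n-2}+1$, let alone improve it.

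The second missing ingredient is the geometric content that turns the encoding into a saving of $\tfrac18$. In the paper this comes from three concrete facts, none of which your ``several near-extremal gluing configurations'' gestures at: (i) the T\'oth--Valtr projective reduction (Theorem~\ref{thm:best-upper}) allowing one to assume $f_P$ is $(n,n-1)$-free; (ii) the classical lemma that no point both ends an $(n-1)$-cup and begins an $(n-2)$-cap (Lemma~\ref{ES}); and (iii) the new ``mate'' argument (Claims~\ref{GV} and~\ref{lemmaQ'}), which shows that every point ending an $(n-2)$-cup and beginning both an $(n-2)$-cap and an $(n-1)$-cup has a mate, that distinct mates are far apart, and hence that deleting at most $|P|/(n-2)$ points removes all such configurations. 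These facts translate, via Claim~\ref{c:strings}, into the word-level exclusion of Claim~\ref{c:last}: $R_i \in \mathcal{R}(\delta*\gamma)$ forces $L_i \in \mathcal{L}(\beta*\alpha\alpha)$, whence
$$m \;\le\; |\mathcal{L}(\beta*\alpha\alpha)| + |\mathcal{R}| - |\mathcal{R}(\delta*\gamma)| \;=\; \left(\tfrac18 + \tfrac34 + o(1)\right)\binom{2n-5}{n-2}.$$
Your plan of ``partitioning the label rectangle into regions and bounding each region'' supplies no mechanism for proving any region is unused; the specific incompatibility above, and the removal argument that makes it legitimate, are the actual content of the theorem and are absent from your proposal.
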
 

Many variants of the Erd\H{o}s-Szekeres problem  were studied, and we refer the reader to surveys~ \cite{BK01,BMP04,MS00,best-upper}.

Our proof of Theorem~\ref{thm:main}, unlike the original proofs of all the bounds listed above, doesn't use induction, but is based on assigning a word in two letter alphabet to each point and analyzing the possible sets of resulting words. We describe the technique in Section~\ref{sec:sequences}.
We prove Theorem~\ref{thm:main} in Section~\ref{sec:points}.

\section{Encoding points}\label{sec:sequences}

In this section we introduce the concepts and prove the basic results underlying the proof of Theorem~\ref{thm:main}.

For a positive integer $m$, let $[m]:=\{1,2,\ldots,m\}$, and let $[m]^{(2)}$ denote the set of two element subsets of $[m]$.

Fix $f: [m]^{(2)}\rightarrow\mathbb{R}$. 
We say that a sequence $(q_1,q_2,\ldots,q_t)$ of elements of $[m]$, satisfying $q_1 <q_2< \ldots <q_t$ is a \textit{$t$-cup with respect to $f$} if $$f(q_1,q_2)\le f(q_2,q_3)\le \ldots \le f(q_{t-1},q_t),$$ and this sequence is a \textit{$t$-cap with respect to $f$} if $$f(q_1,q_2)\ge f(q_2,q_3)\ge \ldots \ge f(q_{t-1},q_t).$$
(We write $f(x,y)$ instead of $f(\{x,y\})$ for brevity.)
We say that a $t$-cup (or a $t$-cap) $(q_1,q_2,\ldots,q_t)$ \emph{starts} at $q_1$ and \emph{ends} in $q_t$.

We say that the function $f$ is \emph{$(k,l)$-free} if there exists no $k$-cup and no $l$-cap with respect to $f$. Let $ES'(k,l)$ denote the maximum $m$ such that there exist a $(k,l)$-free $f:[m]^{(2)} \to \mathbb{R}$. One of the main byproducts of the construction introduced in this section is a new proof of the following result of Erd\H{o}s and Szekeres. 

\begin{theorem}[\cite{ES35}]\label{thm:cupsandcaps}
$$ES'(k+2,l+2) \leq \binom{k+l}{k}.$$
\end{theorem}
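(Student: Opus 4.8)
The plan is to injectively map the ground set $[m]$ into a family of combinatorial objects counted by $\binom{k+l}{k}$, namely the words in $\{a,b\}^{k+l}$ with exactly $k$ occurrences of $a$ (equivalently, the monotone lattice paths from $(0,0)$ to $(k,l)$). Assuming $f$ is $(k+2,l+2)$-free, such an injection immediately forces $m\le\binom{k+l}{k}$. To each point $q\in[m]$ I attach two statistics: $\gamma(q)$, the length of the longest cup ending at $q$, and $\pi(q)$, the length of the longest cap ending at $q$. Freeness gives $1\le\gamma(q)\le k+1$ and $1\le\pi(q)\le l+1$, so the naive map $q\mapsto(\gamma(q),\pi(q))$ lands in a grid of $(k+1)(l+1)$ cells. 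A quick sanity check shows this naive map is not enough: because $f$ is an abstract function on pairs and need not be consistent on triples, two points $q<q'$ can share the pair $(\gamma,\pi)$ (if $q$ admits a longest cup with last-edge value above $f(q,q')$ and a longest cap with last-edge value below it, no extension is forced), and in any case the grid only yields the product bound $(k+1)(l+1)$, which is weaker than $\binom{k+l}{k}$.

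This is exactly where the word construction must do the work. Rather than recording only the terminal cup and cap lengths, I would let the word $w(q)$ track the interleaved history of how these lengths are built up to $q$, recording at each stage whether the current extremal chain is extended as a cup or as a cap together with the value $f$ takes on the edge used; reading this history as a sequence of the two letters produces a word of length $\gamma(q)+\pi(q)-2\le k+l$, which after a canonical normalization (padding to length $k+l$) sits among the $\binom{k+l}{k}$ target words. The purpose of retaining the last-edge information that the bare pair $(\gamma,\pi)$ throws away is precisely to restore the injectivity that triple-inconsistency destroys.

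The heart of the matter, and the step I expect to be the main obstacle, is injectivity: one must show that if $q<q'$ receive the same (normalized) word, then an extremal cup or cap ending at $q$ can be spliced through the edge $(q,q')$ with an extremal chain continuing beyond $q'$ to produce either a $(k+2)$-cup or an $(l+2)$-cap, contradicting freeness. Establishing this demands a careful case analysis on how $f(q,q')$ compares with the recorded last-edge values at $q$: whenever one comparison fails to block an extension, the forbidden configuration must be exhibited, and the encoding has to be designed so that sharing a word forces at least one such extension. Verifying that the chosen word genuinely guarantees this splicing — so that the map into lattice paths is both injective and lands in a set of size exactly $\binom{k+l}{k}$ — is the crux; once it is in place, the bound $ES'(k+2,l+2)\le\binom{k+l}{k}$ drops out immediately by counting the target words.
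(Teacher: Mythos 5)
Your proposal is a plan rather than a proof, though it is a plan in the same spirit as the paper's argument: injectively encode each element of $[m]$ by a word over a two-letter alphabet with exactly $k$ letters of one type and $l$ of the other, so that $m \le \binom{k+l}{k}$ follows by counting. Your diagnosis of why the naive pair $(\gamma(q),\pi(q))$ fails for an abstract $f$ is also sound. However, the two steps you yourself flag as ``the main obstacle'' and ``the crux'' are exactly the steps that are missing, and they do not follow from anything you wrote. First, the encoding is never actually defined: ``the interleaved history of how these lengths are built up'' is not a well-defined object, since the longest cup and the longest cap ending at $q$ are separate chains with no canonical common chronology; moreover your word has length $\gamma(q)+\pi(q)-2$, which need not equal $k+l$, and you give no rule for the padding --- yet the padding must be chosen compatibly with whatever injectivity argument follows, so it is not a routine normalization. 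Second, the injectivity argument is entirely absent: you describe what a proof would have to do (splice chains through the edge $(q,q')$ and do a case analysis) but carry out none of it.

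For comparison, the paper resolves both issues with one device your outline does not reach: rather than recording the history of a single extremal chain, it records, for \emph{every} length $t\in[k]$, the quantity $\alpha^i(t)$ defined as the minimum of $f(j,i)$ over all $(t+1)$-cups ending with the edge $(j,i)$ (set to $+\infty$ if no such cup exists --- this solves the padding problem automatically), and symmetrically $\beta^i(t)$ for $t\in[l]$ as the maximum last-edge value over $(t+1)$-caps (or $-\infty$). The word $L_i$ is the $\alpha/\beta$-pattern obtained by sorting these $k+l$ values in increasing order with a fixed tie-breaking rule. Injectivity is then Lemma~\ref{lemma:a_b}: for $i<j$, take $x$ minimal with $\alpha^i(x)>f(i,j)$ (freeness forces $x\le k$) and $y$ maximal with $\beta^i(y)<f(i,j)$; a cup witnessing $\alpha^i(x-1)\le f(i,j)$ extends through $j$, giving $\alpha^j(x)\le f(i,j)\le \beta^j(y)$. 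Consequently the $x$-th $\alpha$ and the $y$-th $\beta$ occur in opposite relative orders in $L_i$ and in $L_j$, so $L_i\ne L_j$. Without this (or an equivalent) precise definition of the word and this exchange-of-order lemma, your proposal does not yield the bound.
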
 

Our  goal is to assign words to every element of $[m]$ which partially encode the structure of cups and caps ending and starting at this element.
For every $i\in [m]$ we define four functions  $\alpha^i, \gamma^i: \mathbb{N} \to \mathbb{R}\cup \{ \pm \infty\}$ and $\beta^i,\delta^i: \mathbb{N} \to \mathbb{R}\cup \{\pm \infty\}$.
The values of these functions at  $t \in \mathbb{N}$ are defined as follows:
\begin{itemize}
\item Let  $\alpha^i(t)$ be the minimum $f(j,i)$ such that there exists a $(t+1)$-cup which ends with the pair $(j,i)$, i.e. a sequence $q_1<q_2<...<q_{t-1}<j<i$ such that 
$$f(q_1,q_2)\le...\le f(q_{t-1},j) \le f(j,i).$$
If there is no $(t+1)$-cup ending in $i$, let $\alpha^i(t)=+\infty$. 
\item Let $\beta^i(t)$ be the maximum $f(j,i)$  such that there exists a $(t+1)$-cap which ends with the pair $(j,i)$. If there is no such cap then $\beta^i(t)=-\infty$.
\item Let $\gamma^i(t)$ be the maximum  $f(i,j)$ such that there exists a $(t+1)$-cup which starts with the pair $(i,j)$, and if there is no such cup then $\gamma^i(c)=-\infty$.
\item Finally, let $\delta^i(t)$  be the minimum $f(i,j)$  such that there exists a $(t+1)$-cap which starts with the pair $(i,j)$, and if there is no such cap then $\delta^i(t)=+\infty$.
\end{itemize}
(The above functions encode the optimal values for further extending the cups and caps of given length which start or end at $i$.)

Assume now that the function $f$ is $(k+2,l+2)$-free. Let  ${\cal L}^{k,l}$ denote the set of all words of length $k+l$ in the alphabet $\{\alpha,\beta\}$, which use exactly 
$k$ symbols ``$\alpha$" and $l$ symbols ``$\beta$". We define a word $L_i \in {\cal L}^{k,l}$ for each $i \in [m]$ as follows.
Sort the multiset $( \alpha^i(1),\alpha^i(2),...,\alpha^i(k),\beta^i(1),\beta^i(2),...,\beta^i(l))$ in increasing order, using a convention that when the elements repeat, then the values of the function  $\alpha^i$ precede the values of the function $\beta^i$. The word  $L_i$ is obtained from the resulting sequence by replacing the values of $\alpha^i$ and $\beta^i$ by the symbols $\alpha$ and $\beta$ respectively. Figure~\ref{fig:example} provides an example.  (The first table is lists the values of the function $f$ and the second one the values of $\alpha^i$, $\beta^i$ and $L_i$.)

Let ${\cal R}^{k,l}$ denote the set of all words of length $k+l$ in the alphabet $\{\gamma,\delta\}$, which use exactly 
$k$ symbols ``$\gamma$" and $l$ symbols ``$\delta$". We define a word $R_i \in {\cal R}^{k,l}$, symmetrically to the word $L_i$, by sorting the values of $\gamma_i$ and $\delta_i$, using the convention that the $\delta$ symbols precede the  $\gamma$ symbols if the corresponding function values are equal.

\begin{figure}

\begin{center}
\begin{tabular}{| r  | r| r | r | r | r| } \hline
\backslashbox{$i$}{$j$} &1 & 2 & 3 & 4 & 5 \\ \hline
1 & &  2 & 4 &-2  & 1 \\ \hline
2 &2 &     & 6 &-6  &-1   \\ \hline
3 &4 & 6   &  & -9 & -5 \\ \hline
4 & -2& -6   &-9  & & 7\\ \hline
5 & 1& -1&  -5&  7&   \\ \hline
\end{tabular}
\end{center}

\begin{center}
    \begin{tabular}{ |r | r | r | r | r | c|}
    \hline
    $i$ &$\alpha^i(1)$ & $\alpha^i(2)$ & $\beta^i(1)$ & $\beta^i(2)$ & $L_i$ \\ \hline
    $1$ & $\infty$ & $\infty$ & $-\infty$ & $-\infty$ & $\beta\beta\alpha\alpha$  \\ \hline
    2 & $2$ & $\infty$ & $2$ & $-\infty$ & $\beta \alpha \beta \alpha$ \\ \hline
    3 & $4$ & $6$ & $6$ & $-\infty$ &  $\beta \alpha \alpha \beta$\\ \hline
    4 & $-9$  & $\infty$ & $-2$ & $-6$ &  $\alpha\beta\beta\alpha$\\ \hline
    5 & $-5$ & $7$ & $7$ & $-1$ & $\alpha \alpha \beta \beta$ \\ \hline
    \end{tabular}
\end{center}
\caption{\label{fig:example} An example of a $(4,4)$-free function and corresponding $L_i$'s.}
\end{figure}

We are now ready for the first result on $(k+2,l+2)$-free functions.

\begin{lemma} \label{lemma:a_b}
Let $f:[m]^{(2)} \to \mathbb{R}$ be $(k+2,l+2)$-free, and let $1\le i<j \le m$. Then there exist $x \in [k], y\in [l]$, such that
\begin{equation}\label{e:a_b}
 \alpha^i(x)>f(i,j)>\beta^i(y) \text{ and } \alpha^j(x)\le f(i,j)\le\beta^j(y).
\end{equation}

Symmetrically, there exist $x' \in [k], y'\in [l]$, s.t. 
\begin{equation}
\gamma^j(x')<f(i,j)<\delta^j(y') \text{ and } \gamma^i(x')\ge f(i,j)\ge\delta^i(y').
\end{equation}
\end{lemma}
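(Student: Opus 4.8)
The plan is to produce $x$ and $y$ separately, handling the cup side ($\alpha$) and the cap side ($\beta$) independently; the $\gamma,\delta$ statement then follows by the mirror symmetry that reverses the linear order on $[m]$, which turns cups starting at a vertex into cups ending at the corresponding vertex and likewise for caps. So I would focus on finding $x \in [k]$ with $\alpha^i(x) > f(i,j)$ and $\alpha^j(x) \le f(i,j)$, and $y \in [l]$ with $\beta^i(y) < f(i,j)$ and $\beta^j(y) \ge f(i,j)$; rearranging these is exactly \eqref{e:a_b}.

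First I would record two elementary monotonicity facts: $\alpha^i(\cdot)$ is non-decreasing and $\beta^i(\cdot)$ is non-increasing. Indeed, deleting the first element of a $(t+2)$-cup ending with a pair $(p,i)$ yields a $(t+1)$-cup ending with the same pair, so the set of attainable final values for $(t+2)$-cups is contained in that for $(t+1)$-cups; taking the minimum gives $\alpha^i(t) \le \alpha^i(t+1)$, and the cap case is symmetric. The heart of the argument is an extension step that couples $i$ to the larger vertex $j$. With the convention $\alpha^i(0) = -\infty$, I claim that for every $x \ge 1$,
\[
\alpha^i(x-1) \le f(i,j) \implies \alpha^j(x) \le f(i,j).
\]
The reason is that $\alpha^i(x-1) \le f(i,j)$ produces an $x$-cup ending in $i$ whose last pair $(p,i)$ satisfies $f(p,i) \le f(i,j)$; since $i < j$, appending $j$ preserves the cup condition and yields an $(x+1)$-cup ending with the pair $(i,j)$, whence $\alpha^j(x) \le f(i,j)$. (The case $x=1$ is just the observation that the pair $(i,j)$ is itself a $2$-cup, so $\alpha^j(1) \le f(i,j)$.) The same extension, applied to the $(k+1)$-cup ending in $i$ that realizes $\alpha^i(k)$, shows $\alpha^i(k) > f(i,j)$: were $\alpha^i(k) \le f(i,j)$, appending $j$ would create a $(k+2)$-cup, contradicting $(k+2,l+2)$-freeness.

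With these in hand I would let $x$ be the least index in $[k]$ with $\alpha^i(x) > f(i,j)$; such $x$ exists because $\alpha^i(k) > f(i,j)$. Minimality (together with $\alpha^i(0) = -\infty$) gives $\alpha^i(x-1) \le f(i,j)$, so the extension step yields $\alpha^j(x) \le f(i,j)$, and thus $\alpha^i(x) > f(i,j) \ge \alpha^j(x)$, as required. Running the mirror-image argument on caps — $\beta^i$ non-increasing, convention $\beta^i(0) = +\infty$, the implication $\beta^i(y-1) \ge f(i,j) \implies \beta^j(y) \ge f(i,j)$, and $\beta^i(l) < f(i,j)$ coming from $(l+2)$-cap-freeness — produces the least $y \in [l]$ with $\beta^i(y) < f(i,j) \le \beta^j(y)$. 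Combining the two gives \eqref{e:a_b}, and reversing the order of $[m]$ gives the symmetric $\gamma,\delta$ statement.

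I expect the main obstacle to be purely bookkeeping: keeping the off-by-one between a ``$t$-cup'' and the index of $\alpha^i$ (which refers to $(t+1)$-cups) consistent throughout, and ensuring that the cup structure forces the extension inequality $f(p,i) \le f(i,j)$ in the correct non-strict direction, while the choice of the threshold index forces the strict inequality $\alpha^i(x) > f(i,j)$ on the other side. Pinning down the boundary conventions $\alpha^i(0) = -\infty$ and $\beta^i(0) = +\infty$, so that the $x=1$ and $y=1$ cases are not special, is the one place where a strictness or sign slip would break the chain of inequalities.
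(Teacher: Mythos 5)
Your proof is correct and follows essentially the same route as the paper's: choose the least $x$ with $\alpha^i(x) > f(i,j)$, extend the corresponding cup by one step through $j$ to get $\alpha^j(x) \le f(i,j)$, argue the mirror image for $\beta$, and dispose of the $\gamma,\delta$ statement by order-reversal symmetry. If anything, your write-up is slightly more careful than the paper's: you justify $x \le k$ by the same extension step applied at $x-1=k$ (the paper's terse remark that $\alpha^i(k+1)=+\infty$ only yields $x \le k+1$ on its face), and your choice of the \emph{least} $y$ with $\beta^i(y) < f(i,j)$ is the correct one where the paper says ``maximum.''
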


\begin{proof} By symmetry it suffices to prove (\ref{e:a_b}).
Let $x$ be chosen minimum  $\alpha^{i}(x) > f(i,j)$. Note that $x \leq k$, as $\alpha^{i}(k+1)=+\infty$.  We have $\alpha^{i}(x~-~1)\leq f(i,j)$, or $x=1$. Therefore, if $x \geq 2$ then there exists an $x$-cup $(q_1,\ldots,q_{x-1},i)$ such that $f(q_{x-1},i)  \leq f(i,j)$. It follows that 
 $(q_1,\ldots,q_{x-1},i,j)$ is an $(x+1)$-cup.
Thus $\alpha^{i}(x)  \geq f(i,j)$.

Similarly, if $y$ is chosen maximum such that $\beta^{i}(y)< f(i,j)$ then  $f(i,j)\le\beta^j(y)$.
\end{proof}

The next corollary follows immediately from Lemma~\ref{lemma:a_b}.

\begin{corollary} \label{cor:neq2}
Let $f:[m]^{(2)} \to \mathbb{R}$ be $(k+2,l+2)$-free, and let $1\le i<j \le m$. Then $L_i \ne L_j$ and $R_i \ne R_j$. 
\end{corollary}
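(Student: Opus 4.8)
The plan is to prove only that $L_i \neq L_j$; the assertion $R_i \neq R_j$ then follows from the symmetric half of Lemma~\ref{lemma:a_b} with the roles of $\alpha,\beta$ played by $\delta,\gamma$. Throughout I fix $1 \le i < j \le m$ and write $v := f(i,j)$, the common reference value that the lemma provides for both indices.

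First I would record a monotonicity observation about the defining functions. If there is a $(t+2)$-cup ending in a pair $(q,i)$, then deleting its first element leaves a $(t+1)$-cup ending in $(q,i)$; hence the set of values $f(q,i)$ competing in the definition of $\alpha^i(t+1)$ is contained in the one for $\alpha^i(t)$, so $\alpha^i(t) \le \alpha^i(t+1)$. The same argument applied to caps gives $\beta^i(t) \ge \beta^i(t+1)$. Thus the $k$ values $\alpha^i(1) \le \cdots \le \alpha^i(k)$ are already sorted, so in the word $L_i$ the $x$-th symbol ``$\alpha$'' counted from the left is exactly the one representing $\alpha^i(x)$; likewise, since $\beta^i(1) \ge \cdots \ge \beta^i(l)$, the $y$-th symbol ``$\beta$'' counted from the right represents $\beta^i(y)$. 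Crucially, these positional descriptions do not depend on $i$, so they give a legitimate way to compare $L_i$ with $L_j$.

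Now I would apply Lemma~\ref{lemma:a_b} to obtain $x \in [k]$ and $y \in [l]$ with $\alpha^i(x) > v > \beta^i(y)$ and $\alpha^j(x) \le v \le \beta^j(y)$. In $L_i$ the value $\beta^i(y)$ is strictly smaller than $\alpha^i(x)$, so the $y$-th ``$\beta$'' from the right stands to the left of the $x$-th ``$\alpha$'' from the left. In $L_j$ we instead have $\alpha^j(x) \le \beta^j(y)$, and since equal values are broken in favour of $\alpha$, the $x$-th ``$\alpha$'' from the left stands to the left of the $y$-th ``$\beta$'' from the right. If $L_i$ and $L_j$ were the same word, then the $x$-th ``$\alpha$'' and the $y$-th-from-the-right ``$\beta$'' would occupy the same two positions in each, and hence appear in the same relative order; the two orders just deduced are opposite, so $L_i \neq L_j$.

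The one delicate point, and the step I would write most carefully, is the bookkeeping around ties: the identification of the $x$-th ``$\alpha$'' with $\alpha^i(x)$ and of the $y$-th-from-the-right ``$\beta$'' with $\beta^i(y)$ must survive the tie-breaking convention, and the $L_j$ inequality $\alpha^j(x) \le \beta^j(y)$ is only non-strict, so it is precisely the convention ``$\alpha$ before $\beta$ when equal'' that forces the required order. Everything else is immediate from Lemma~\ref{lemma:a_b}, and the argument for $R_i \neq R_j$ is identical after swapping the two alphabets and reversing the inequalities, with the tie-breaking convention for $R$ now being used on the $i$ side rather than the $j$ side.
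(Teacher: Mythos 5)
Your proof is correct and takes exactly the route the paper intends: the paper states that Corollary~\ref{cor:neq2} ``follows immediately from Lemma~\ref{lemma:a_b}'', and your argument is precisely that deduction with the necessary supporting details made explicit --- the monotonicity $\alpha^i(1)\le\cdots\le\alpha^i(k)$ and $\beta^i(1)\ge\cdots\ge\beta^i(l)$, the resulting identification of the $x$-th ``$\alpha$'' from the left with $\alpha^i(x)$ and the $y$-th ``$\beta$'' from the right with $\beta^i(y)$, and the use of the tie-breaking convention to handle the non-strict inequality $\alpha^j(x)\le\beta^j(y)$. The symmetric treatment of $R_i\ne R_j$, including the observation that the tie-break is invoked on the $i$ side there, is also accurate.
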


Note that Theorem~\ref{thm:cupsandcaps} immediately follows from Corollary~\ref{cor:neq2}, as $|\mc{R}^{k,l}|=\binom{k+l}{k}$.
The bijective proof of Theorem~\ref{thm:cupsandcaps} presented in this section is due to the first author~\cite{S-MO}. It is inspired by the beautiful argument used by Seidenberg~\cite{S}  to give a bijective proof of the  Erd\H{o}s-Szekeres theorem on monotone subsequences of a sequence~\cite{ES35}. A different bijective proof was recently discovered by Moshkovitz and Shapira~\cite{MS12}.

\section{Proof of Theorem~\ref{thm:main}}\label{sec:points}

We apply the framework introduced in the previous section to the Erd\H{o}s-Szekeres problem as follows.

Let $P=\{p_1, p_2, \ldots,p_m\}$ be a set of points in the plane in general position with Cartesian coordinates $(x_1,y_1),(x_2,y_2)\ldots,(x_m,y_m)$, respectively. 
Assume without loss of generality that $x_1<x_2<...<x_{m}$. We define a function $f_P: [m]^{(2)} \rightarrow \mathbb{R}$ as follows. For $1 \leq i < j \leq m$, let $f_P(i,j)$ be the slope of the line that passes through the points $i,j$, i.e. $$f_P(i,j)= \frac{y_j -y_i}{x_j - x_i}.$$

Assume that $P$ does not contain $n$ points
that are the vertices of a convex $n$-gon.
Thus $f_P$ is $(n,n)$-free, as every $n$-cup (or $n$-cap) with respect to $f_P$ indexes the vertices of a convex $n$-gon. In the following, for simplicity, we will refer to the sets of points indexed by $t$-cups (respectively, $t$-caps) with respect to $f_P$ as \emph{$t$-cups} (respectively, \emph{$t$-caps}) in $P$. We will refer to $3$-cups and $3$-caps as, simply, \emph{cups} and \emph{caps}.

We start by introducing two classical tools used to upper bound $ES(n)$ by T\'oth and Valtr~\cite{best-upper}, and  Erd\H{o}s and Szekeres~\cite{ES35}, respectively. 
The next theorem from \cite{best-upper}, allows us to further restrict potential functions $f_P$.

\begin{theorem} [\cite{best-upper}] \label{thm:best-upper}
Let $N$ be a positive integer such that every point set $P$ with $|P|\ge N$ contains either a set of $n$ points in convex position or a $(n-1)$-cap. Then $ES(n)\le N+1$. 
\end{theorem}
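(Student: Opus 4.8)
The plan is to derive the statement from a symmetry/reduction argument. We want to show that if every set $P$ of size at least $N$ contains $n$ points in convex position or an $(n-1)$-cap, then $ES(n)\le N+1$. The natural approach is contrapositive combined with a symmetry trick: suppose for contradiction that $ES(n)\ge N+2$, so there exists a set $P_0$ of size $N+1$ with no $n$ points in convex position (hence no $n$-cup and no $n$-cap). Under the hypothesis on $N$, such a set must contain an $(n-1)$-cap. The idea is to show we can modify or extend $P_0$ to escape this, producing the desired bound.

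\textbf{Main idea via reflection.} The key observation is that cups and caps swap roles under the reflection $(x,y)\mapsto(x,-y)$: an $(n-1)$-cap in $P$ becomes an $(n-1)$-cup in the reflected set $\overline{P}$, and convex position is preserved. So I would first record that the hypothesis of the theorem, applied to the reflected family, yields a symmetric conclusion: every point set of size at least $N$ contains either $n$ points in convex position or an $(n-1)$-\emph{cup}. Now take a putative extremal set $P$ of size exactly $N+1$ with no $n$ points in convex position. I would like to append one extra point to $P$ to force a larger convex structure, and count sizes to reach a contradiction with the assumed bound $ES(n)\le N+1$.

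\textbf{The gluing step.} The heart of the argument should be the following: given a set $A$ of size $N$ that (by hypothesis) contains an $(n-1)$-cap, place a new point $p^{*}$ far below and to the right of $A$ so that $p^{*}$ together with the endpoints of any $(n-1)$-cap extends it. Concretely, if $A$ has no $n$ points in convex position, then $A$ contains an $(n-1)$-cap $C$; adjoining $p^{*}$ positioned so that the last edge of $C$ continues downward (slope condition $f(q_{n-2},p^{*})\le f(q_{n-3},q_{n-2})$) would make $C\cup\{p^{*}\}$ an $n$-cap, contradicting the absence of $n$ points in convex position in $A\cup\{p^{*}\}$. Thus a set of size $N+1$ built this way can have no such configuration, and chasing the cardinalities gives $ES(n)\le N+1$. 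The precise placement of $p^{*}$ relative to all points of $A$ is what must be arranged carefully.

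\textbf{Expected obstacle.} The delicate part is making the single-point extension simultaneously compatible with \emph{every} potential cap ending in $A$, not just one fixed cap, while guaranteeing that adding $p^{*}$ does not accidentally create $n$ points in convex position through some unrelated cup. This requires choosing $p^{*}$ with $x$-coordinate larger than all of $A$ and $y$-coordinate sufficiently negative that the slopes $f(i,p^{*})$ are monotone and extreme enough to extend caps but never cups. I expect the bookkeeping to verify that no new $n$-cup is introduced, and that the ``$+1$'' in $N+1$ accounts exactly for this extra point, to be the main technical burden; everything else is a direct application of the reflection symmetry together with the definition of $ES(n)$ as the threshold for forced convex position.
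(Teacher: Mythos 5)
There is a genuine gap, and it is fatal to the whole strategy rather than a fixable technicality. Your ``gluing step'' adjoins a \emph{new} auxiliary point $p^{*}$ to a set $A$ (of size $N$, or to the counterexample $P_0$ of size $N+1$), finds an $n$-cap in $A\cup\{p^{*}\}$, and declares this to contradict ``the absence of $n$ points in convex position in $A\cup\{p^{*}\}$.'' But no such absence was ever assumed or established: the counterexample hypothesis says only that $P_0$ itself contains no $n$ points in convex position. The augmented set $A\cup\{p^{*}\}$ is an artificial set you built, and nothing prevents it from containing convex $n$-gons --- indeed you constructed $p^{*}$ precisely so that it does. A convex $n$-gon using $p^{*}$ lives outside $P_0$ and contradicts nothing. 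This is why the ``$+1$'' cannot be realized by a point you are free to place; it must come from a point that is already in the given set, whose position you do not control.

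The argument that actually works (this theorem is quoted in the paper from T\'oth and Valtr~\cite{best-upper}, which is where its proof lives) runs in the opposite direction. Given a set $P$ of $N+1$ points in general position, choose $p\in P$ a vertex of the convex hull of $P$, and take a line $\ell$ through $p$ with all of $P\setminus\{p\}$ strictly on one side. Applying a suitable projective transformation that sends $\ell$ (essentially) to the line at infinity, one may assume that $p$ behaves like a point infinitely far below the remaining $N$ points, while convex position among subsets of $P$ is preserved. Now apply the hypothesis to the transformed $P\setminus\{p\}$, which has exactly $N$ points: either it contains $n$ points in convex position (done), or it contains an $(n-1)$-cap, and that cap together with the far-below point $p$ forms $n$ points of $P$ in convex position (done). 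Your reflection observation (that the hypothesis also yields forced $(n-1)$-cups) is correct but plays no role in closing this gap; the missing ingredients are the choice of a hull vertex \emph{inside} the given set and the projective normalization that makes it extend every cap of the remaining points.
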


Theorem~\ref{thm:best-upper} allows us to consider only point sets $P$ such that $f_P$ is  $(n,n-1)$-free. 

The next lemma essentially appears in~\cite{ES35}, but we include its proof for completeness.

\begin{lemma} [\cite{ES35}] \label{ES} If $f_P$ is  $(n,n-1)$-free then no point in $P$ is simultaneously the end of an $(n-1)$-cup and the beginning of an $(n-2)$-cap. Symmetrically, no point $p$ is the end of an $(n-2)$-cap and the beginning of an $(n-1)$-cup. 
\end{lemma}

\begin{proof} Suppose for a contradiction that $(q_1,\ldots ,q_{n-2}, q)$ is an $(n-1)$-cup with respect to $f_P$ and that  $(q, q'_2, \ldots, q'_{n-2})$ is an $(n-2)$-cap. If $f(q_{n-2},q) \leq f(q, q'_2)$ then  $(q_1,\ldots ,q_{n-2}, q, q'_2)$ is an $n$-cup, and, otherwise, $(q_{n-2}, q,\ldots, q'_{n-2})$ is an $(n-1)$-cup, contradicting the fact that $f_P$ is  $(n,n-1)$-free.
\end{proof}

We'd like to extend the above lemma by showing that no point of $P$ can be  the end of an $(n-2)$-cup and the beginning of both an $(n-2)$-cap and $(n-1)$-cup. Unfortunately, this statement is not necessarily true. Fortunately, it can be made true by removing a negligible proportion of points in $P$.

Let $Q\subseteq P$ be the set of points which serve as an end of an $(n-2)$-cup and the beginning of both an $(n-2)$-cap and $(n-1)$-cup. For every point $q\in Q$, fix $V^q=(v^q_1,v^q_2,...,v^q_{n-3},q)$ an $(n-2)$-cup ending in $q$,  $U^q=(q,u^q_2,...,u^q_{n-2})$ an $(n-2)$-cap starting in $q$, and  $W^q:=(q,w^q_2,w^q_3,...,w^q_{n-1})$ an $(n-1)$-cup starting in $q$. (See Figure \ref{fig:example2}.) We say that $(V^q,U^q,W^q)$ is \emph{a $Q$-signature} of $q$.

\begin{figure}

\begin{center}
    \includegraphics[scale=0.65]{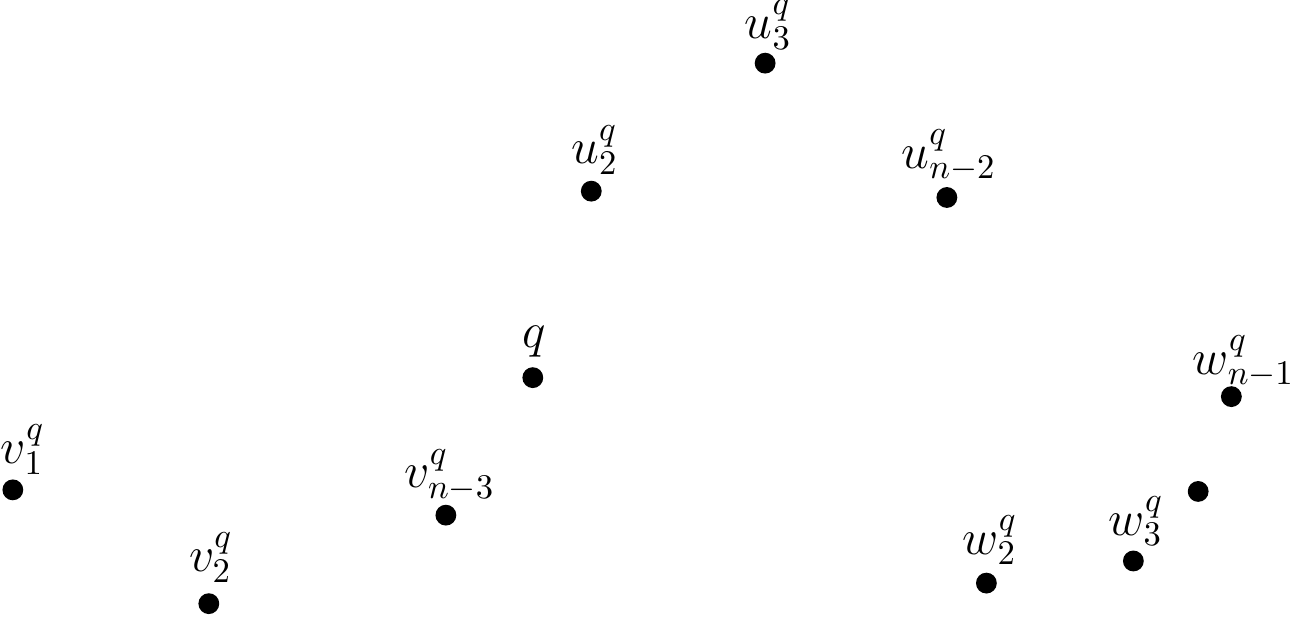}
\end{center} 

\caption{\label{fig:example2} A point $q\in Q$.}
\end{figure}

The following Claims~\ref{GV} and~\ref{lemmaQ'} were inspired by~\cite{V15}, yet they appear not to follow directly from any of the results in~\cite{V15}. 

\begin{claim}\label{GV} If $(V,U,W)$ is a $Q$-signature of a point $q \in Q$, such that $U^q=(q,u_2,...,u_{n-2})$, $W^q=(q,w_2,...,w_{n-1})$, then $u_2 = w_{n-1}$.
\end{claim}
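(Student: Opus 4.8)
The plan is to argue by contradiction, assuming $u_2\neq w_{n-1}$, and to exploit that $f_P$ is $(n,n-1)$-free together with Lemma~\ref{ES}. First I would combine the cup $V$ and the cap $U$, which meet at $q$, exactly as in the proof of Lemma~\ref{ES}: comparing $f(v_{n-3},q)$ with $f(q,u_2)$, the case $f(v_{n-3},q)>f(q,u_2)$ would make $(v_{n-3},q,u_2,\ldots,u_{n-2})$ an $(n-1)$-cap, which is forbidden; hence $f(v_{n-3},q)\le f(q,u_2)$ and $C:=(v_1,\ldots,v_{n-3},q,u_2)$ is an $(n-1)$-cup ending at $u_2$. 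Consequently $u_2$ is the end of an $(n-1)$-cup, so by Lemma~\ref{ES} it cannot begin an $(n-2)$-cap. Symmetrically $w_{n-1}$ is the end of the $(n-1)$-cup $W$, so $w_{n-1}$ cannot begin an $(n-2)$-cap either. A parallel comparison of $V$ with $W$ (whose concatenation, were it a cup, would have length $2n-4\ge n$) shows $f(q,w_2)<f(v_{n-3},q)\le f(q,u_2)$, which helps locate the two points.

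Since $P$ is in general position, two points to the right of $q$ have equal slope to $q$ iff they coincide; thus it suffices to prove $f(q,u_2)=f(q,w_{n-1})$. I would assume the contrary and split on the sign of $f(q,u_2)-f(q,w_{n-1})$ and on the order of $u_2,w_{n-1}$ along the $x$-axis. Two of the four cases close immediately. If $w_{n-1}$ precedes $u_2$ and $f(q,u_2)>f(q,w_{n-1})$, then from the triple $q,w_{n-1},u_2$ one gets $f(w_{n-1},u_2)>f(q,u_2)\ge f(u_2,u_3)$, so $(w_{n-1},u_2,u_3,\ldots,u_{n-2})$ is an $(n-2)$-cap beginning at $w_{n-1}$, contradicting the previous paragraph. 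If instead $u_2$ precedes $w_{n-1}$ and $f(q,u_2)<f(q,w_{n-1})$, then the triple $q,u_2,w_{n-1}$ gives $f(u_2,w_{n-1})>f(q,w_{n-1})>f(q,u_2)$, so appending $w_{n-1}$ to the cup $C$ produces an $n$-cup.

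The main obstacle is the two remaining ``crossed'' configurations, where $u_2$ is the steeper of the two seen from $q$ yet lies to its left (or conversely). Here a single appended or prepended point neither extends $C$ to an $n$-cup nor yields an $(n-2)$-cap at $w_{n-1}$, because the relevant new edge turns out too shallow; indeed a small explicit attempt to build a counterexample shows that the contradiction one is forced into can be an $n$-cup using the tail $u_3,\ldots,u_{n-2}$ of the cap (or several points of $W$) rather than $u_2$ itself, so $(n,n-1)$-freeness must be invoked non-locally. To handle these cases I would track where the convex chain $W$ and the concave chain $U$ cross --- using that in a crossed configuration $u_2$ lies above the whole chain $W$ while $w_{n-1}$ lies below the first cap edge --- and apply a Lemma~\ref{ES}-style exchange at the crossing to manufacture either an $(n-2)$-cap starting at $w_{n-1}$ or an $n$-cup. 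Carrying out this bookkeeping carefully, and verifying that the cap tail $u_3,\ldots,u_{n-2}$ is positioned as needed relative to $w_{n-1}$, is where the real work of the claim lies.
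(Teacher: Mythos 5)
Your first two paragraphs are correct and reproduce the paper's setup: comparing $f(v_{n-3},q)$ with $f(q,u_2)$ and with $f(q,w_2)$ is exactly the paper's observation that $u_2$ lies above, and $w_2$ below, the line through $v_{n-3}$ and $q$, and your two ``easy'' cases are handled validly (the case where $q,w_{n-1},u_2$ is a cup, killed via Lemma~\ref{ES} applied to $w_{n-1}$, and the case where $q,u_2,w_{n-1}$ is a cup, killed by extending the cup $C$). But the two crossed cases you defer are precisely where the content of Claim~\ref{GV} lies, and your proposal leaves them as a plan rather than a proof; that is a genuine gap, not a matter of routine bookkeeping.

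Moreover, the plan aims at the wrong target. You intend to ``manufacture either an $(n-2)$-cap starting at $w_{n-1}$ or an $n$-cup,'' i.e., to contradict $(n,n-1)$-freeness (possibly through Lemma~\ref{ES}) alone. In the crossed case where $u_2$ lies to the left of $w_{n-1}$ and $q,u_2,w_{n-1}$ is a cap, no such structure need exist among the points at hand: the paper's contradiction there is that $W\cup\{u_2\}$ is a set of $n$ points in convex position (the cup $W$ forms the bottom chain and $u_2$ lies above its hull, since $f(q,u_2)>f(q,w_{n-1})$) --- a convex $n$-gon which is neither a cup nor a cap, contradicting the standing assumption that $P$ contains no $n$ points in convex position. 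The same issue recurs in the other crossed case ($u_2$ to the right of $w_{n-1}$, $q,w_{n-1},u_2$ a cap): the paper splits on whether $w_{n-2},u_2,u_3$ is a cup or a cap; the cup branch does yield an $n$-cup, namely $V,w_{n-2},w_{n-1}$, but the cap branch again terminates in a convex $n$-gon, $q,w_{n-1},u_2,w_{n-2},w_{n-3},\ldots,w_2$, after first deducing that $w_{n-3},w_{n-2},u_2$ is a cup to ensure all $n$ points are in convex position. So completing your argument requires invoking the no-convex-$n$-gon hypothesis on $P$ itself, an option your sketch never mentions, and the ``careful bookkeeping'' you postpone is essentially the entire case analysis that constitutes the paper's proof.
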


\begin{proof}
Suppose for a contradiction that $u_2  \neq w_{n-1}$. Let $V=(v_1,v_2,...,v_{n-3},q)$ and let $\ell$ be the line passing through $v_{n-3}$ and $q$.

If $u_2$ is below the line $\ell$, then $v_{n-3},U$ is an $(n-1)$-cap. Therefore $u_2$ is above $\ell$. Similarly, if $w_2$ is above  $\ell$, then  $v_{n-3},W$ is an $n$-cup, therefore $w_2$ is below $\ell$.

Suppose that $u_2$ is to the left of $w_{n-1}$. If $q,u_2,w_{n-1}$ is a cup then $V,u_2,w_{n-1}$ is an $n$-cup. Otherwise, $W \cup \{u_2\}$ is a vertex set of a convex $n$-gon. (See Figure \ref{fig:example4}.)

\begin{figure}

\begin{center}
    \includegraphics[scale=0.55]{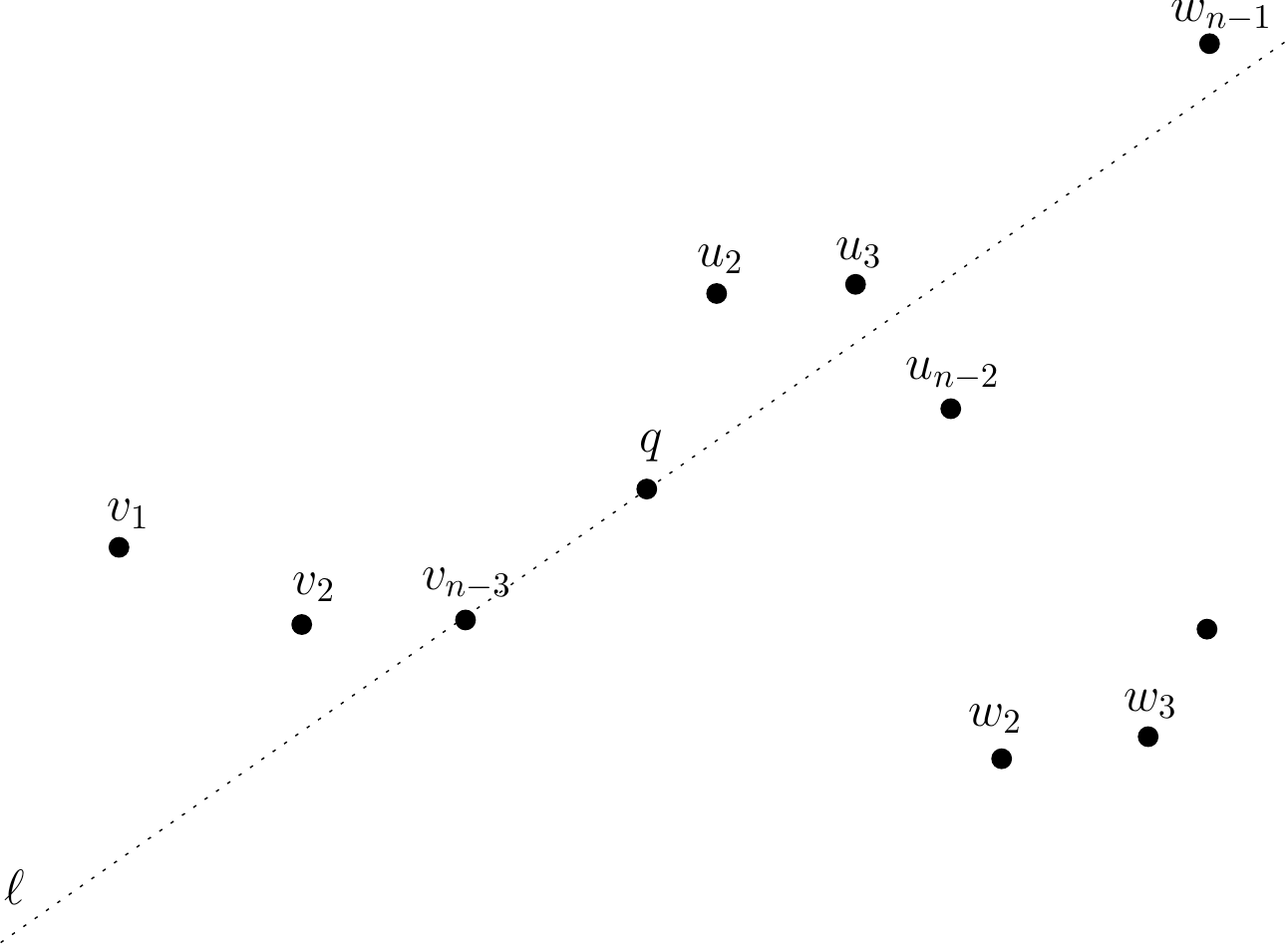}
\end{center} 

\caption{\label{fig:example4}  The point $u_2$ is to the left of $w_{n-1}$.}
\end{figure}

Therefore $u_2$ is to the right to $w_{n-1}$. Suppose that $w_{n-2},u_2,u_3$ is a cap.  Then $w_{n-3},w_{n-2},u_2$ is a cup, as otherwise $w_{n-3},w_{n-2},u_2, \ldots, u_{n-2}$ is an $(n-1)$-cap. If $q,w_{n-1},u_2$ is a cap, then  $q,w_{n-1},u_2,w_{n-2},w_{n-3},...,w_2$ is a vertex set convex  $n$-gon. Therefore $q,w_{n-1},u_2$ is a cup, implying that $w_{n-1},u_2,u_3$ is a cap. If $w_{n-2},w_{n-1},u_2$ is a cap, then $w_{n-2},w_{n-1},u_2,u_3,...,u_{n-2}$ is an $(n-1)$-cap. Thus $w_{n-2},w_{n-1},u_2$ is a cup, therefore $W,u_2$ is an $n$-cup.  (See Figure \ref{fig:example5}.) 

It remains to consider the subcase when $w_{n-2},u_2,u_3$ is  a cup. In this case $v_{n-3},q,w_{n-2}$ is a cup, and thus $V,w_{n-2},w_{n-1}$ is an $n$-cup, yielding the final contradiction.
\end{proof}

\begin{figure}

\begin{center}
    \includegraphics[scale=0.55]{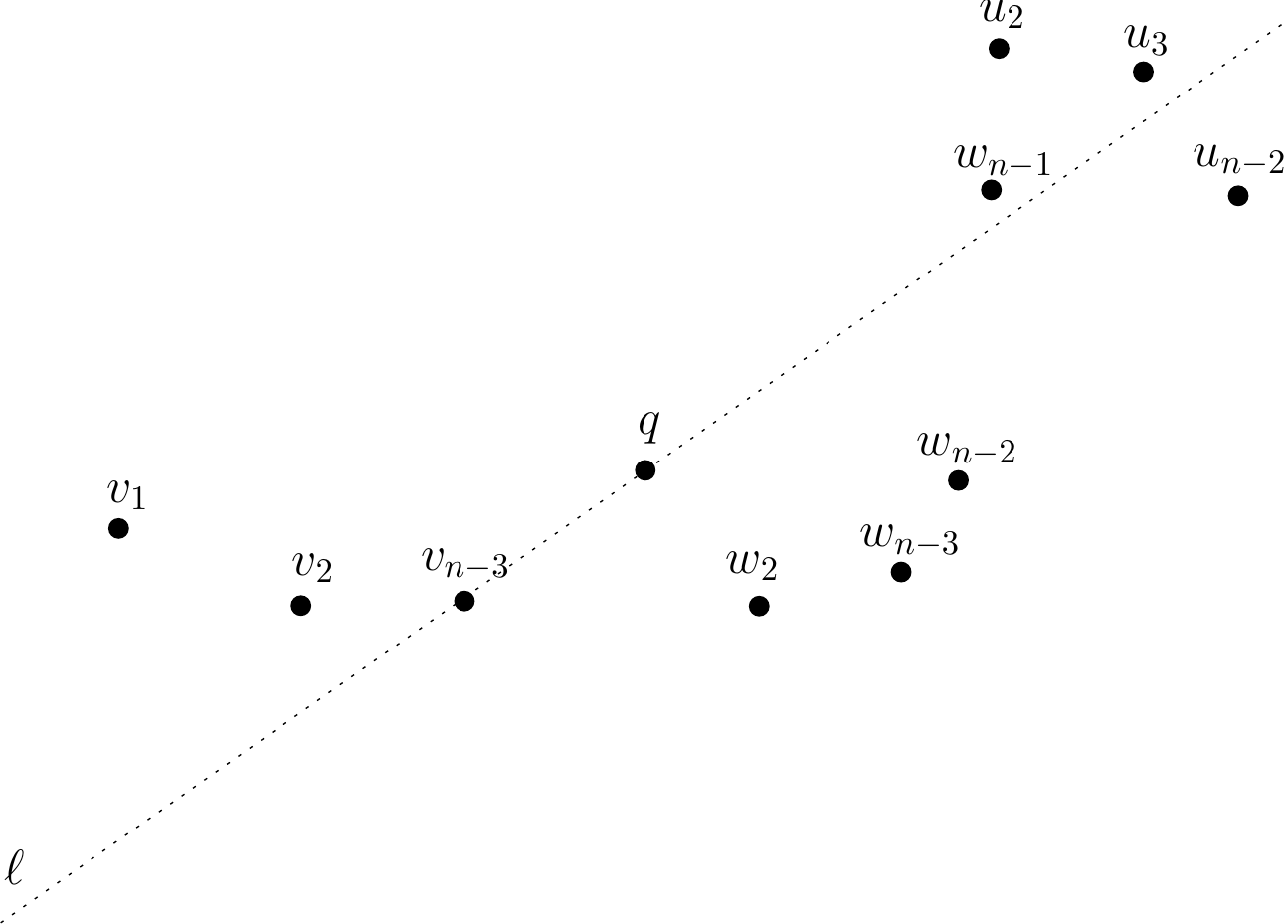}
\end{center} 

\caption{\label{fig:example5} The point $u_2$ is to the right of $w_{n-1}$.}
\end{figure}

We say that $q' \in P$ is \emph{a mate} of a point $q \in Q$ if there exist a $Q$-signature $(V,U,W)$ of $q$ with points of $U$ and $W$ labelled as in Claim~\ref{GV}, such that $q'=u_2=w_{n-1}$. Let $Q'$ be the set of all mates of points in $Q$. By Claim~\ref{GV} every point in $Q$ has at least one mate.

\begin{claim} \label{lemmaQ'}
$|Q'|\le |P|/(n-2)$.
\end{claim}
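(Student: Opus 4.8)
The plan is to set up a charging argument: assign to every mate a set of $n-2$ points of $P$ and prove that the sets assigned to distinct mates are pairwise disjoint. Disjointness then gives $(n-2)|Q'|\le|P|$, which is exactly the claim.

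First I would record the local structure of a mate. Fix $q'\in Q'$ together with a witness $q\in Q$ and a $Q$-signature, so that $W=(q=w_1,w_2,\ldots,w_{n-1}=q')$ is an $(n-1)$-cup ending at $q'$ and $U=(q,q'=u_2,u_3,\ldots,u_{n-2})$ is an $(n-2)$-cap having $q'$ as its second vertex. Since $f_P$ is $(n,n-1)$-free there is no $n$-cup and no $(n-1)$-cap, so $q'$ ends a cup of the maximum possible length $n-1$. Moreover, by Lemma~\ref{ES}, because $q'$ ends an $(n-1)$-cup it cannot start an $(n-2)$-cap; the sub-cap $(q',u_3,\ldots,u_{n-2})$ then shows that the longest cap starting at $q'$ has length \emph{exactly} $n-3$. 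These two rigidity facts, $q'$ ends a longest cup and starts a longest (length $n-3$) cap, are what I expect to drive the whole argument.

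Next I would extract two \emph{separation inequalities} valid for any pair of mates $q'_1\ne q'_2$ with $x(q'_1)<x(q'_2)$. If $f_P(q'_1,q'_2)$ were at least the minimum final slope over all longest cups ending at $q'_1$, then appending $q'_2$ to such a cup would produce an $n$-cup, contradicting $(n,n-1)$-freeness; hence $f_P(q'_1,q'_2)$ is strictly smaller. Symmetrically, if $f_P(q'_1,q'_2)$ were at least the minimum initial slope over all longest ($n-3$)-caps starting at $q'_2$, then prepending $q'_1$ would give an $(n-2)$-cap starting at $q'_1$, contradicting the previous paragraph; so again $f_P(q'_1,q'_2)$ is strictly smaller. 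Informally, in $x$-order the mates are mutually cup- and cap-incompatible. I would then charge each mate to the $n-2$ points of a canonically chosen witness — for concreteness the $(n-2)$-cap $U$, equivalently the last $n-2$ vertices of $W$ — where the witness is selected to be extremal in the relevant slope (minimising the final slope of the cup, respectively the initial slope of the cap). To finish, one shows that if a point $p$ belonged to the charged sets of two distinct mates $q'_1,q'_2$, then the separation inequalities let one splice the two witnessing structures through $p$ into an $n$-cup, an $(n-1)$-cap, or the vertex set of a convex $n$-gon, the desired contradiction.

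The main obstacle is precisely this disjointness step. For \emph{arbitrary} witnesses it is false: two mates may have witnessing cups sharing a long common prefix, or witnessing caps sharing a common suffix, so the charged sets can coincide. The crux will be to use the canonical slope-extremal choice of witness together with the separation inequalities to show that any shared point forces one mate's cup (or cap) to extend the other's beyond its maximal admissible length, producing a forbidden configuration. I expect this splicing case analysis — organising the possible overlap patterns and checking that each yields an $n$-cup, $(n-1)$-cap, or convex $n$-gon — to be the hardest part, and to be carried out in the same geometric spirit as the argument in Claim~\ref{GV}.
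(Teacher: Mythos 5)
Your proposal contains a genuine gap: the disjointness of the charged sets, which you yourself identify as ``the crux'' and ``the hardest part,'' is never actually proved. What you have established --- the two separation inequalities saying that $f_P(q'_1,q'_2)$ is smaller than the relevant extremal slopes at $q'_1$ and at $q'_2$ --- is correct but does not by itself prevent the witness structures of two different mates from sharing points, and you give no argument that the ``canonical slope-extremal choice of witness'' repairs this. Hoping that a splicing case analysis ``in the spirit of Claim~\ref{GV}'' will close the argument is not a proof; as it stands, the conclusion $(n-2)|Q'|\le|P|$ does not follow from what you wrote. (A smaller error: the $(n-2)$-cap $U$ and the last $n-2$ vertices of $W$ are \emph{not} the same point set, so your ``equivalently'' is false, and the choice matters for any charging scheme.)

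The paper's proof avoids charging/disjointness altogether and instead proves an ordering statement: if $q'_1$ is to the left of $q'_2$, then $q_2$ (the point in $Q$ whose mate is $q'_2$) lies strictly to the \emph{right} of $q'_1$. Consequently the whole $(n-1)$-cup $W^{q_2}$, which runs from $q_2$ to $q'_2$, is squeezed between $q'_1$ and $q'_2$ in the $x$-order, so the indices of any two mates differ by at least $n-2$, which immediately gives $|Q'|\le |P|/(n-2)$. The proof of the ordering statement is genuinely geometric, not purely slope-combinatorial: assuming $q_2$ is left of $q'_1$, one shows every point of $P$ to the right of $q_2$ lies \emph{below the line} through $q_2$ and $q'_2$ (a point above it yields either a convex $n$-gon together with $W^{q_2}$, or an $n$-cup via $V^{q_2},q'_2$), and similarly for the line through $q_1,q'_1$; then $q_1,q'_1,q'_2,u^{q_2}_3,\ldots,u^{q_2}_{n-2}$ is an $(n-1)$-cap, a contradiction. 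This ``everything below a line'' step is the idea your outline is missing, and it is what lets the paper separate the witnesses of distinct mates in $x$-coordinate rather than fight the overlap problem head on. If you want to complete your approach, the most plausible route is to prove exactly this ordering lemma, at which point your charging scheme becomes unnecessary.
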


\begin{proof}
Consider $q'_1,q'_2\in Q'$. Let $q'_1$ and $q'_2$ be the mates of $q_1$ and $q_2$, respectively. Let $(V^{q_1},U^{q_1},W^{q_1})$, $(V^{q_2},U^{q_2},W^{q_2})$ be  $Q$-signatures of $q_1$ and $q_2$ with points labelled as in the definition such that $q'_1=u^{q_1}_2=w^{q_1}_{n-1}$ and $q'_2=u^{q_2}_2=w^{q_2}_{n-1}$. We assume without loss of generality that $q'_1$ is to the left of $q'_2$. We will show that $q_2$ is to the right of $q'_1$. This would imply the claim, as it will follow that the points of $W^{q_2}$ lie to the right of $q'_1$ and to the left of $q_2$, implying that the indices of any two points in $Q'$ differ by at least $n-2$.

Assume to the contrary that $q_2$ is to the left of $q'_1$. We claim that all the points of $P$ to the right of $q_2$ lie below the line that passes through $q_2$ and $q'_2$. Indeed, suppose that  $r$ is a point above this line. If $r$ is to the left of $q'_2$  then we find  a convex $n$-gon in $P$, using $W^{q_2}$ and $r$. If $r$ is to the right of $q'_2$ then we find a $n$-cup using $V^{q_2},q'_2$ and $r$. (Note that $V^{q_2},q'_2$ is an $(n-1)$-cup, because otherwise  $v^{q_2}_{n-3},U^{q_2}$ is an $(n-1)$-cap.). Analogously, all the points of $P$ to the right of $q_1$ must be below the line that goes through $q_1,q'_1$. (See Figure \ref{fig:example3} which illustrates a possible arrangement of points in this case.) Thus  $q_1,q'_1,q'_2,u^{q_2}_3,u^{q_2}_4,...,u^{q_2}_{n-2}$ is an $(n-1)$-cap, a contradiction. \end{proof}

\begin{figure}
\begin{center}
    \includegraphics[scale=0.7]{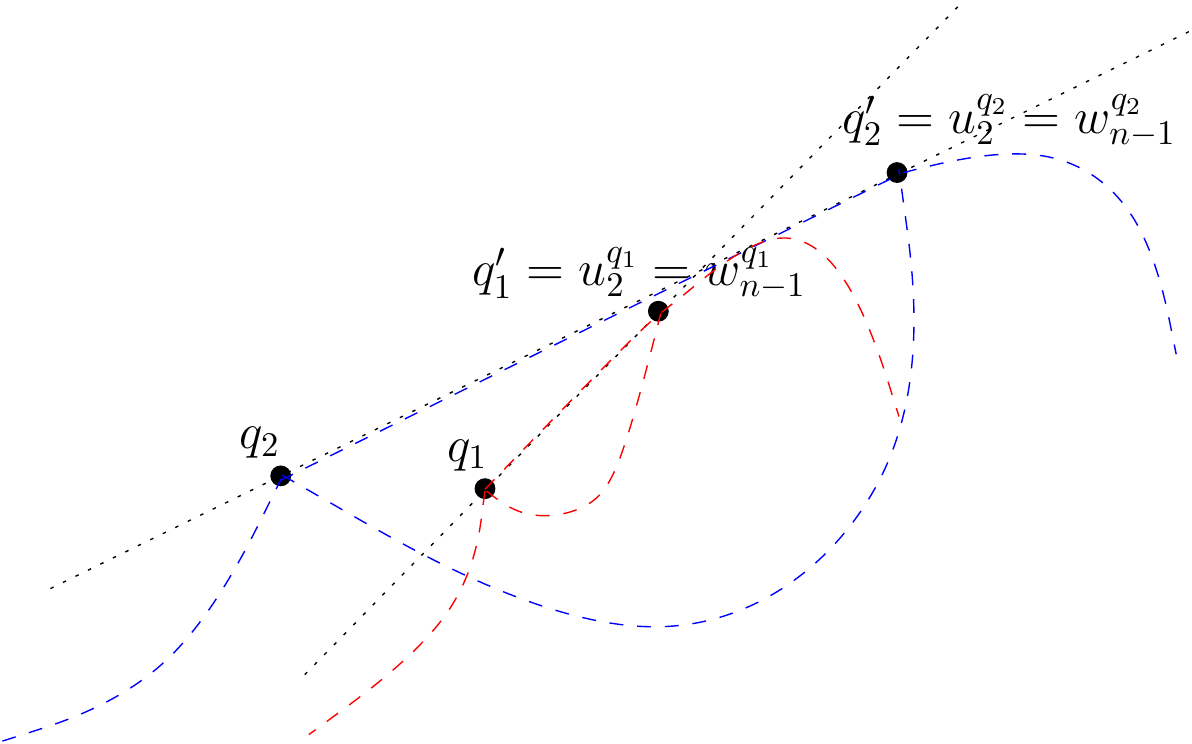}
\end{center} 
\caption{\label{fig:example3} A possible arrangement of points in Claim \ref{lemmaQ'}.}
\end{figure}

As our goal is to prove $m \leq (\frac{7}{8}+o(1))\binom{2n-5}{n-2}$, by Claim~\ref{lemmaQ'} we may consider $P - Q'$ instead of $P$, and thus assume that no point of $P$ is simultaneously an end of an $(n-2)$-cup and the beginning of both an $(n-2)$-cap and $(n-1)$-cup.

It is time to utilize the encoding of points introduced in Section~\ref{sec:sequences}.
Let $L_1,\ldots,L_{m} \in {\cal L}^{n-2,n-3}$ and  
$R_1,\ldots,R_{m} \in {\cal R}^{n-2,n-3}$ be the words associated with points of $P$ via the function $f_P$ defined in Section~\ref{sec:sequences}. We will write $\mc{L}$ and $\mc{R}$ instead of ${\cal L}^{n-2,n-3}$ and ${\cal R}^{n-2,n-3}$, for brevity.
For a pair of words $w_1$ and $w_2$, let ${\cal L}(w_1 * w_2)$ denote the set of words in ${\cal L}$ which start with $w_1$ and end with $w_2$. We write ${\cal L}(* w_2)$ and ${\cal L}(w_1*)$, instead of ${\cal L}(w_1 * w_2)$, if $w_1$ (respectively, $w_2$) is the null word. The sets ${\cal R}(w_1 * w_2)$ are defined analogously. The next claim follows immediately from the definitions of $L_i$ and $R_i$.

\begin{claim}\label{c:strings}
For $i \in [m]$ the following statements hold
\begin{itemize}
\item $p_i$ is the end of an $(n-1)$-cup if and only if $L_i \in \mc{L}(*\beta)$,
\item $p_i$ is the end of an $(n-2)$-cap if and only if $L_i \in \mc{L}(\alpha*)$,
\item $p_i$ is the beginning of an  $(n-1)$-cup if and only if $R_i \in \mc{R}(*\gamma)$,
\item $p_i$ is the beginning of an  $(n-2)$-cap if and only if $R_i \in \mc{R}(\delta*)$, and
\item $p_i$ is not an end of any $(n-2)$-cup if and only if $L_i \in \mc{L}(*\alpha\alpha)$.
\end{itemize}  
\end{claim}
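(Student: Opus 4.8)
The plan is to unwind each biconditional directly from the definitions of the encoding words $L_i$ and $R_i$ given in Section~\ref{sec:sequences}, using the observation that the sorted order of the values $\alpha^i, \beta^i, \gamma^i, \delta^i$ translates exactly into the positions of the symbols in the words. The key dictionary is this: a symbol ``$\alpha$" appears in the last position of $L_i$ precisely when $\alpha^i(k)$ (here $k=n-2$) is the largest among all the sorted values, and, by the definition of $\alpha^i$, that value is finite exactly when an $(n-1)$-cup ends at $p_i$. So each item reduces to identifying which function value controls the relevant extreme position of the word, and then recalling what finiteness (versus $\pm\infty$) of that value encodes in terms of cups and caps.

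First I would treat the four ``ending'' and ``beginning'' items, which are the most direct. For the first item, recall that $\alpha^i(t)=+\infty$ exactly when no $(t+1)$-cup ends at $p_i$, and $\alpha^i$ is nondecreasing in $t$; since we sort in increasing order with $\alpha$ preceding $\beta$ on ties, the final symbol of $L_i$ is ``$\beta$" iff $\beta^i(n-3)$ is strictly the maximum of all the sorted values, equivalently iff $\beta^i(n-3)$ is finite and hence an $(n-2)$-cap ends at $p_i$ — but I must be careful: the claim says $L_i \in \mathcal{L}(*\beta)$ corresponds to $p_i$ being the \emph{end of an $(n-1)$-cup}, so I would instead argue via $\alpha^i(k)=\alpha^i(n-2)$ being $+\infty$. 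When $\alpha^i(n-2)=+\infty$ the largest sorted value is an $\alpha$-value, forcing the last symbol to be ``$\alpha$"; conversely $L_i$ ending in ``$\beta$" forces $\alpha^i(n-2)<+\infty$, i.e. an $(n-1)$-cup ends at $p_i$. The second item is symmetric: $L_i \in \mathcal{L}(\alpha*)$ means the smallest sorted value is an $\alpha$-value, which happens iff $\beta^i(n-3)=-\infty$ fails, i.e. iff $\beta^i(1)$ is not smaller, and unwinding this gives that an $(n-2)$-cap ends at $p_i$. The third and fourth items are the exact mirror statements for $R_i$, obtained by replacing $\alpha,\beta$ by $\gamma,\delta$ and ``ends" by ``begins", using that $\gamma^i,\delta^i$ encode starting cups and caps.

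For the fifth item, $L_i \in \mathcal{L}(*\alpha\alpha)$, I would observe that the last two symbols are both ``$\alpha$" exactly when the two largest sorted values are both $\alpha$-values, which (since there are exactly $n-3$ $\beta$-symbols and they would otherwise occupy a tail position) forces $\alpha^i(n-2)=\alpha^i(n-3)=+\infty$. By monotonicity $\alpha^i(n-3)=+\infty$ already means no $(n-2)$-cup ends at $p_i$, and conversely if no $(n-2)$-cup ends at $p_i$ then $\alpha^i(n-3)=\alpha^i(n-2)=+\infty$, so both top positions are forced to be $\alpha$. I would state this equivalence cleanly and then conclude.

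The main obstacle I anticipate is purely bookkeeping rather than conceptual: I must get the index shifts right, since here the word lives in $\mathcal{L}^{n-2,n-3}$ so $k=n-2$ and $l=n-3$, and the ``$(t+1)$-cup'' convention in the definitions means $\alpha^i(n-2)$ corresponds to an $(n-1)$-cup while $\alpha^i(n-3)$ corresponds to an $(n-2)$-cup. The tie-breaking convention (``$\alpha$ precedes $\beta$ when values are equal'') must be invoked carefully to guarantee that an infinite value genuinely lands in the extreme position and is not displaced by an equal finite value; since $+\infty$ and $-\infty$ are strict extremes this is automatic, so in fact the tie-breaking only matters for internal positions and each biconditional turns on finiteness alone. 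Given that, each of the five items is a one- or two-line verification, and the whole claim follows ``immediately from the definitions'' as asserted.
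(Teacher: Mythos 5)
Your overall plan (unwind the definitions, use monotonicity and finiteness of the extreme values) is the natural one --- the paper gives no written proof, declaring the claim immediate --- but as written your argument establishes only one direction of each biconditional, and the direction you omit is the one that needs an actual idea. For item 1 you show that $\alpha^i(n-2)=+\infty$ forces the last symbol to be $\alpha$, and then say ``conversely, $L_i$ ending in $\beta$ forces $\alpha^i(n-2)<+\infty$''; but that is the \emph{contrapositive} of the same implication, not its converse. What remains unproved is: if an $(n-1)$-cup ends at $p_i$, then $L_i$ ends in $\beta$. Finiteness of $\alpha^i(n-2)$ alone does not give this; you need the comparison $\alpha^i(n-2)\le\beta^i(1)$, which holds because a finite $\alpha^i(n-2)$ is the slope $f(j,i)$ of some edge into $p_i$ while $\beta^i(1)=\max_{j<i}f(j,i)$ (every pair is a $2$-cap), and you need the tie-breaking rule ($\alpha$ precedes $\beta$) to place the $\beta$ last in the case $\alpha^i(n-2)=\beta^i(1)$. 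Both ingredients are absent, and you assert the opposite: that ``each biconditional turns on finiteness alone'' and that ``tie-breaking only matters for internal positions.'' That is false: for a set of $n-1$ points on the parabola $y=x^2$ (which is $(n,n-1)$-free), the last point $p_m$ satisfies $\alpha^m(n-2)=\beta^m(1)=f(m-1,m)$, so item 1 holds for it \emph{only because} of the tie-break, and with the opposite convention it would fail. The same missing comparisons ($\alpha^i(1)\le\beta^i(n-3)$ when the latter is finite, and $\beta^i(1)\ge\alpha^i(n-3)$ when the latter is finite) are exactly what items 2 and 5 require; your parenthetical reason in item 5 is not an argument. You also have the monotonicity of $\beta^i$ backwards in places: $\beta^i$ is nonincreasing in $t$, so its largest value is $\beta^i(1)$, not $\beta^i(n-3)$.

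Items 3 and 4 are moreover not the ``exact mirror'' of items 1 and 2, and waving at symmetry hides a genuine wrinkle that your proposal would never surface. Under an increasing sort, $\delta$-values equal to $+\infty$ (present whenever no long cap starts at $p_i$) always occupy the top positions, so the last symbol of $R_i$ cannot detect the existence of cups; dually, a decreasing sort fails at the last point, where all $\gamma$-values are $-\infty$. Concretely, for $n=6$ and five points on $y=x^2$, the first point begins a $5$-cup and no $4$-cap, and one checks that under every choice of sort direction and tie-break at least one of items 3, 4 \emph{as literally stated} fails for it. What is actually true under the paper's stated convention for $R_i$ (increasing order, $\delta$ before $\gamma$ on ties) is the swapped pair: $p_i$ begins an $(n-1)$-cup iff $R_i\in\mathcal{R}(\delta*)$, and $p_i$ begins an $(n-2)$-cap iff $R_i\in\mathcal{R}(*\gamma)$. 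The swap is harmless downstream, since Claim~\ref{c:last} only uses that $R_i\in\mathcal{R}(\delta*\gamma)$ means ``$p_i$ begins an $(n-1)$-cup and begins an $(n-2)$-cap,'' but a proof that treats items 3 and 4 as a formal relabelling of items 1 and 2 cannot be correct as written; a careful verification must redo the boundary-symbol analysis for $\gamma,\delta$ (using $\delta^i(1)=\min_{j>i}f(i,j)$ and $\gamma^i(1)=\max_{j>i}f(i,j)$) and will discover this discrepancy.
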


Claim \ref{c:strings}, Lemma~\ref{ES} and our assumptions on $P$, based on Claims~\ref{GV} and~\ref{lemmaQ'}, imply the following.

\begin{claim}\label{c:last}
 If $R_i \in \mc{R}(\delta*\gamma)$ for some $i \in [m]$, then $L_i \in \mc{L}(\beta*\alpha\alpha)$.
\end{claim}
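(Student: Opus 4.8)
The plan is to translate both the hypothesis and the conclusion into purely combinatorial statements about cups and caps by means of Claim~\ref{c:strings}, and then to read off the result from Lemma~\ref{ES} together with the standing assumption on $P$ secured by Claims~\ref{GV} and~\ref{lemmaQ'}. First I would unpack the hypothesis. Since $\mc{R}(\delta*\gamma) = \mc{R}(\delta*) \cap \mc{R}(*\gamma)$, the condition $R_i \in \mc{R}(\delta*\gamma)$ holds exactly when $R_i \in \mc{R}(\delta*)$ and $R_i \in \mc{R}(*\gamma)$ simultaneously. By the third and fourth bullets of Claim~\ref{c:strings} this says precisely that $p_i$ is the beginning of an $(n-1)$-cup and the beginning of an $(n-2)$-cap.

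Next I would unpack the conclusion. The requirement $L_i \in \mc{L}(\beta*\alpha\alpha)$ factors as $L_i \in \mc{L}(\beta*)$ together with $L_i \in \mc{L}(*\alpha\alpha)$. Because each word of $\mc{L}$ begins with either $\alpha$ or $\beta$, the set $\mc{L}(\beta*)$ is exactly the complement $\mc{L} \setminus \mc{L}(\alpha*)$; hence by the second bullet of Claim~\ref{c:strings}, the statement $L_i \in \mc{L}(\beta*)$ is equivalent to ``$p_i$ is not the end of an $(n-2)$-cap''. By the fifth bullet, $L_i \in \mc{L}(*\alpha\alpha)$ is equivalent to ``$p_i$ is not the end of any $(n-2)$-cup''. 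Thus it remains only to verify these two negative statements under the hypothesis.

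For the first I would invoke the symmetric half of Lemma~\ref{ES}, which asserts that no point is simultaneously the end of an $(n-2)$-cap and the beginning of an $(n-1)$-cup. Since we have just shown that $p_i$ begins an $(n-1)$-cup, it follows that $p_i$ cannot be the end of an $(n-2)$-cap, so $L_i \in \mc{L}(\beta*)$. For the second I would appeal directly to the standing assumption on $P$ obtained from Claims~\ref{GV} and~\ref{lemmaQ'}, namely that no point of $P$ is simultaneously the end of an $(n-2)$-cup and the beginning of both an $(n-2)$-cap and an $(n-1)$-cup. Since $p_i$ begins both an $(n-2)$-cap and an $(n-1)$-cup, it cannot be the end of any $(n-2)$-cup, so $L_i \in \mc{L}(*\alpha\alpha)$. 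Combining the two gives $L_i \in \mc{L}(\beta*\alpha\alpha)$, as desired.

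I do not expect a genuine obstacle here, since the entire argument is bookkeeping funneled through Claim~\ref{c:strings}; the proof is essentially a dictionary lookup followed by two applications of already-established facts. The points that require care are the identification $\mc{L}(\beta*) = \mc{L} \setminus \mc{L}(\alpha*)$ and its reformulation as ``$p_i$ is not the end of an $(n-2)$-cap'', the choice of the \emph{symmetric} half of Lemma~\ref{ES} rather than its first half, and checking that the hypothesis (beginning of both an $(n-2)$-cap and an $(n-1)$-cup) matches the standing assumption verbatim so that Claims~\ref{GV} and~\ref{lemmaQ'} apply.
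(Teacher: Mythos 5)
Your proof is correct and follows exactly the route the paper intends: the paper states Claim~\ref{c:last} as an immediate consequence of Claim~\ref{c:strings}, Lemma~\ref{ES}, and the standing assumption derived from Claims~\ref{GV} and~\ref{lemmaQ'}, and your argument is precisely the fleshing-out of that implication (translating $\mc{R}(\delta*\gamma)$ via bullets three and four, then obtaining $\mc{L}(\beta*)$ from the symmetric half of Lemma~\ref{ES} and $\mc{L}(*\alpha\alpha)$ from the assumption that no point of $P$ ends an $(n-2)$-cup while beginning both an $(n-2)$-cap and an $(n-1)$-cup).
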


We are now ready to finish the proof of Theorem~\ref{thm:main}. Let $\mc{S}=\{R_1,\ldots,R_m\}$. By Claim~\ref{c:last} and Corollary~\ref{cor:neq2}, we have $$|\mc{S} \cap \mc{R}(\delta*\gamma)| \leq |\mc{L}(\beta*\alpha\alpha)|=\left(\frac{1}{8}+o(1)\right)\binom{2n-5}{n-2}.$$
Thus, by Corollary~\ref{cor:neq2},
$$m=|\mc{S}| \leq |\mc{S} \cap \mc{R}(\delta*\gamma)| + |\mc{R} -  \mc{R}(\delta*\gamma)| \leq \left(\frac{7}{8}+o(1)\right)\binom{2n-5}{n-2},$$
as desired.

\end{document}